\documentclass{amsart}
\usepackage{amsfonts}
\usepackage{amsthm}
\usepackage{amsmath}
\usepackage{amsfonts}
\usepackage{latexsym}
\usepackage{amssymb}
\usepackage[latin1]{inputenc}
\usepackage{verbatim}

\newcommand{\U}{\mathcal{U}}

\newcommand{\n}{\mathcal{N}}

\newcommand{\um}{\U_{\cM}}

\def\um{\mathcal{U}_\m}

\def\a{\mathcal A}
\def\b{\mathcal B}

\newcommand{\CC}{\mathbb{C}}
\newcommand{\RR}{\mathbb{R}}
\newcommand{\NN}{\mathbb{N}}

\newtheorem{fed}{Definition}[section]
\newtheorem{teo}[fed]{Theorem}
\newtheorem*{teo*}{Theorem}
\newtheorem{lem}[fed]{Lemma}
\newtheorem{cor}[fed]{Corollary}
\newtheorem{pro}[fed]{Proposition}
\theoremstyle{definition}
\newtheorem{rem}[fed]{Remark}
\newtheorem{rems}[fed]{Remarks}

\newcommand{\topologia}{\,\mbox{\tiny sot}}

\def\m{\mathcal{M}}
\def\msa{\m^{\text{sa}}}

\def\cD{\mathcal{D}}

\def\cP{\mathcal{P}}

\newcommand{\ds}{\displaystyle}

\begin{document}

\title{Towards the Carpenter's Theorem}
\author{Martín Argerami}
\address{Department of Mathematics, University of Regina, Regina SK, Canada}
\email{argerami@math.uregina.ca}
\thanks{M. Argerami supported in part by the Natural Sciences and Engineering Research
Council of Canada}
\author{Pedro Massey}
\address{Departamento de Matem\'atica, Universidad Nacional de La Plata and
Instituto Argentino de Matem\'atica-conicet, Argentina}
\email{massey@mate.unlp.edu.ar}
\thanks{P. Massey supported in part by CONICET of Argentina and a PIMS Postdoctoral Fellowship}

\subjclass[2000]{Primary 46L99, Secondary 46L55}

\keywords{Diagonals of operators, Schur-Horn theorem, conditional expectations}

\begin{abstract}
Let $\m$ be a II$_1$ factor with trace $\tau$,
$\a\subseteq \m$  a masa and  $E_\a$   the unique
conditional expectation onto $\a$. Under some technical
assumptions on the inclusion $\a\subseteq \m$, which hold true for
any semiregular masa of a separable factor, we
show that for elements $a$ in certain dense families of the positive part of the unit ball of $\a$,
it is possible to find a projection $p\in\m$ such that $E_\a(p)=a$.
This shows a new family of instances of a
conjecture by Kadison, the so-called ``carpenter's theorem''.
\end{abstract}
\maketitle

\section{Introduction}

As it is well-known, the Pythagorean Theorem (PT) states that the square
of the norm of the sum of two orthogonal vectors is equal to the
sum of the square of the norms of each vector. A converse of the
theorem would be the statement that if such equality occurs, then
the two vectors were orthogonal to begin with. Such a result
allows a carpenter to check his right-angles by just measuring
length, so that's why PT's converse is called the
``carpenter's theorem'' (CT) by Kadison. In his work \cite{Kad0, Kad1}, he considers extensions of PT
and its corresponding converses CT to infinite dimension, getting to the unexpected and
striking Theorem 15 in \cite{Kad1} (extended by Arveson in \cite{arv-diag}).
These generalizations of PT and CT are carried in \cite{Kad0} to
the realm of II$_1$ factors, where the PT basically becomes
tautological, and the CT becomes the following:

\smallskip

\noindent{\bf
Conjecture }[Kadison's carpenter's theorem]\label{conj Kad} Let
$\a$ be a masa of the II$_1$ factor $\m$ and let $a\in \a^+_1$.
Then there exits a projection $p\in \cP(\m)$ such that
$E_\a(p)=a$, where $E_\a$ denotes the trace preserving conditional
expectation onto $\a$.

\medskip

In the finite dimensional case, the CT is a particular case of the
well-known Schur-Horn theorem. Whether the Schur-Horn theorem
extends or not to II$_1$ factors is unknown at the moment (see
\cite{argmas, arvkad}). In this paper we focus on the CT in II$_1$
factors. Assuming some restrictions on the factor and the masa,
which hold true for semiregular masas in separable II$_1$ factors,
we show that the statement holds for various dense families. It is
worth mentioning here that the statement of the CT (and also of
Schur-Horn) is only meaningful in the case of masas, for this
would imply the result for any other abelian subalgebra, and also
because both statements are likely to fail when the subalgebra
considered is not abelian: indeed, CT does not hold for
non-abelian subalgebras of $M_n(\CC)$, and so neither does
Schur-Horn.

Although our results fail to settle the CT conjecture in full
generality, our methods lead us to consider a possible strategy for
obtaining the CT under the conditions we consider for the inclusion
$\mathcal A\subseteq \mathcal M$, as explained at the end of the
paper. It is worth noting that these technical conditions hold true
for inclusions $\mathcal A\subseteq \mathcal M$ where $\mathcal A$
is semiregular.

\section{Preliminaries}\label{dos}

Throughout the paper $\m$ denotes a II$_1$ factor with normalized
faithful normal trace $\tau$. We denote by $\msa$, $\m^+$, $\um$,
the sets of selfadjoint, positive, and unitary elements of $\m$. By
$\cP(\m)$ we mean the set of projections of $\m$. Given $a\in \msa$
we denote its spectral measure by $p^a$; thus, $p^a(\Delta)$ is the
spectral projection associated with a Borel set $\Delta\subset \RR$.
The characteristic function of the set $\Delta$ is denoted by
$\chi^{\phantom{A}}_\Delta$ and its Lebesgue measure by $m(\Delta)$.
The unitary orbit of $a \in\m^{sa}$ is the set $\U_\m(a)=\{ uau^* :
u \in \U_\m\}.$

\smallskip

In \cite{Kad0}, Kadison conjectured that if $\a\subseteq \m$ is a
masa and $a\in \a^+_1$ i.e., $a\in \a^+$ and $0\leq a\leq 1$, then
there exits a projection $p\in \cP(\m)$ such that $E_\a(p)=a$. This
conjecture is equivalent to the following assertion: for
$p\in\cP(\m)$, $a\in\a$,
\begin{equation}\label{equiv Kad}
0\le a\le1, \tau(a)=\tau(p)\ \Leftrightarrow\ a\in E_\a(\um(p)).
\end{equation}

Using \eqref{equiv Kad} it can be shown that Kadison's conjecture is
a particular case of a more general conjecture (a Schur-Horn theorem
in II$_1$ factors), that was stated as an open problem by Arveson
and Kadison in \cite{arvkad}. In \cite{argmas} we proved a weaker
version of Arveson-Kadison's conjecture, that restricted to the
situation in \eqref{equiv Kad} is
\begin{teo}\label{noso}
Let $\a\subseteq \m$, $a\in\a$, $p\in \cP(\m)$.
Then $$ 0\leq a\leq1, \ \ \tau(a)=\tau(p) \ \ \Leftrightarrow \ \ a\in \overline{E_\a(\U_\m(p))}^{\topologia}$$
\end{teo}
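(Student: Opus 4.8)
The backward implication is the routine one, so I would dispatch it first. For any unitary $u\in\um$ the element $upu^*$ is a projection, so $0\le upu^*\le 1$, and since $E_\a$ is positive, unital and trace preserving we get $0\le E_\a(upu^*)\le 1$ and $\tau(E_\a(upu^*))=\tau(upu^*)=\tau(p)$. On the unit ball the order interval $\{x:0\le x\le1\}$ is sot-closed, and $\tau$, being normal, is sot-continuous on bounded sets; hence both conditions $0\le a\le1$ and $\tau(a)=\tau(p)$ pass to sot-limits and hold for every $a\in\overline{E_\a(\um(p))}^{\topologia}$.

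For the forward implication the target set is $\{a\in\a:0\le a\le1,\ \tau(a)=\tau(p)\}$, and I would argue by successive reductions. Elements of $\a$ with finite spectrum are $\|\cdot\|_2$-dense in this set and the trace $\tau(p)$ can be preserved along the approximation; since a bounded $\|\cdot\|_2$-convergent net is sot-convergent and membership in $\overline{E_\a(\um(p))}^{\topologia}$ is a closed condition, it suffices to treat $a=\sum_{i=1}^n\lambda_i q_i$ with $q_i\in\cP(\a)$ pairwise orthogonal, $\sum_i q_i=1$, $\lambda_i\in[0,1]$ and $\sum_i\lambda_i\tau(q_i)=\tau(p)$. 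On each corner $q_i\m q_i$, a II$_1$ factor containing the masa $q_i\a$, I would realize $\lambda_i q_i$ approximately as $E_\a$ of a projection of relative trace $\lambda_i$. Fix a large $N$ and, using diffuseness of the masa, partition $q_i=\sum_{j=1}^N f_j$ with $f_j\in\cP(\a)$, $\tau(f_j)=\tau(q_i)/N$, all Murray--von Neumann equivalent in $\m$; choose partial isometries assembling a system of matrix units $(e_{jk})$ in $q_i\m q_i$ with $e_{jj}=f_j$. The key algebraic point is that $E_\a(e_{jk})=f_j\,E_\a(e_{jk})\,f_k=0$ for $j\ne k$, because $f_j\a f_k=0$; hence for any projection $P$ in the subalgebra $\mathrm{span}\{e_{jk}\}\cong M_N$ one has $E_\a(P)=\sum_j d_j f_j$, where $(d_j)$ is the scalar diagonal of $P$. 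By the finite-dimensional Schur--Horn theorem there is a projection $P_i\in M_N$ whose diagonal is the constant vector $(\lambda_i,\dots,\lambda_i)$ up to an error supported on a single slot, so $E_\a(P_i)=\lambda_i q_i+O(1/N)$ in $\|\cdot\|_2$.

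I would then set $r=\sum_i P_i\in\cP(\m)$, so that $E_\a(r)=\sum_i E_\a(P_i)\to a$ in sot as $N\to\infty$, while $\tau(r)=\sum_i\lambda_i\tau(q_i)+O(1/N)\to\tau(p)$. After a final $O(1/N)$ adjustment of $r$ by a small projection in $\a$ making $\tau(r)=\tau(p)$ exactly, the equal-trace projections $r$ and $p$ become unitarily equivalent in the factor $\m$, so $r=upu^*$ and $E_\a(upu^*)=E_\a(r)$ approximates $a$; letting $N\to\infty$ places $a$ in $\overline{E_\a(\um(p))}^{\topologia}$.

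The delicate part is the compatibility of the matrix units with the given masa together with the error control: one must produce, inside each corner, a partition of the masa into $N$ pieces that are genuinely equivalent in $\m$ and carry matrix units whose off-diagonal part is annihilated by $E_\a$ (this is precisely what licenses the diagonal interpretation of $E_\a(P_i)$), and then track all the $O(1/N)$ remainders in $\|\cdot\|_2$ so the final projection can be matched to $p$ exactly in trace. Equivalently, the whole argument can be recast as the statement that $\overline{E_\a(\um(p))}^{\topologia}$ is convex: it automatically contains every projection $q\in\cP(\a)$ with $\tau(q)=\tau(p)$ (such $q$ is unitarily equivalent to $p$ and fixed by $E_\a$), and the sot-closed convex hull of these projections is exactly the target order-interval slice by the abelian Lyapunov/Krein--Milman argument. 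From this viewpoint the convexity of the orbit-expectation is the true crux, and it is this convexity that I expect to be the main obstacle.
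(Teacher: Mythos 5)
Your argument is essentially correct, but note that the paper itself contains no proof of Theorem \ref{noso}: it is imported verbatim from the authors' earlier work \cite{argmas}, where it appears as the special case $b=p$ of a majorization-theoretic description of $\overline{E_\a(\U_\m(b))}^{\topologia}$ for arbitrary self-adjoint $b$ (for a projection, majorization collapses to the single trace equation, which is why the statement takes the clean form above). What you give instead is a direct, self-contained proof of exactly this special case, and the route is sound: the backward implication via positivity, unitality and trace-preservation of $E_\a$ plus normality of $\tau$ on bounded sets is fine; the reduction to finite-spectrum $a$ with the trace matched exactly is routine because any masa in a II$_1$ factor is diffuse; and the decisive observation that $E_\a(e_{jk})=E_\a(f_je_{jk}f_k)=f_jf_k\,E_\a(e_{jk})=0$ for $j\ne k$ (using the $\a$-bimodule property and commutativity of $\a$) is precisely what lets you read $E_\a$ of a projection in $\mathrm{span}\{e_{jk}\}\cong M_N(\CC)$ off its scalar diagonal and invoke the finite-dimensional carpenter/Horn theorem. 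Two small points deserve care: the error from adjusting one diagonal slot is $O(1/\sqrt N)$ rather than $O(1/N)$ in $\|\cdot\|_2$ (since $\|f_j\|_2^2=\tau(q_i)/N$), which still vanishes; and the closing paragraph recasting everything as convexity of $\overline{E_\a(\U_\m(p))}^{\topologia}$ is dispensable --- your constructive argument does not need it, and proving that convexity directly is no easier than what you already did. Compared with the reference, your proof buys elementarity and transparency for the projection case at the cost of not yielding the general majorization statement; the \cite{argmas} machinery buys the full Schur--Horn-type description needed elsewhere.
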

Note that in \eqref{equiv Kad} the unitary orbit of the projection
is already strongly closed (and so norm-closed, too), but the
statement in Theorem \ref{noso} is weaker because it is not clear
whether the set on the right-hand side of \eqref{equiv Kad} is
already closed in the strong operator topology (a fact that is
actually equivalent to Kadison's conjecture by Theorem \ref{noso}).

\medskip

\noindent{\bf Matrix Units.} Given a masa $\a$ in $\m$, we denote by $\n_\a$  the normalizer of $\a$ in $\m$, i.e.
 the subgroup of $\um$ given by
 \[
 \n_\a=\{u\in\um: \ u^*\a u=\a\}.
\]
The masa $\a$ is said to be {\bf semiregular} if $(\n_\a)''$ is a factor, and {\bf regular} (or Cartan) if
$(\n_\a)''=\m$. Popa shows in \cite[Proposition 3.6]{popa-kad} that any semiregular masa in a
separable type II factor is Cartan in a hyperfinite subfactor.
His result implies the following:
\begin{pro}\label{iso parc} If $\a\subset \m$ is a semiregular masa in the
separable II$_1$ factor $\m$, then for every $k\in \NN$ there exists
$\{u_i^k\}_{i=1}^{2^k}\subset \n_\a$ and
$\{p^k_i\}_{i=1}^{2^k}\subset\cP(\a)$ such that $\{v^k_{ij}\}_{ij}$,
where $v^k_{ij}=u_i^kp_1^k(u_j^k)^*$,  is a $2^k$-system of matrix
units with $v^k_{jj}=p_j^k\in\cP(\a)$ for $j=1,\ldots,2^k$ and such
that
\begin{equation}\label{tensor}
v_{2i-1,2j-1}^{k+1}+v_{2i,2j}^{k+1}= v_{ij}^k \ , \ \ \ 1\leq i,\ j
\leq 2^k, \end{equation} and such that the family $\{p^k_j\}$
generates all of $\a$.
\end{pro}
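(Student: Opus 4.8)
The plan is to build the matrix units inductively on $k$, starting from Popa's theorem which supplies, after identifying a maximal hyperfinite subfactor $\cR\subseteq\m$ in which $\a$ is Cartan, a compatible tower of finite-dimensional subalgebras. I would first invoke \cite[Proposition 3.6]{popa-kad} to get a hyperfinite subfactor $\cR$ with $\a$ Cartan in $\cR$, and then recall that a Cartan masa in the hyperfinite II$_1$ factor arises from an ergodic measure-preserving equivalence relation; concretely $\cR$ is generated by $\a$ together with its normalizer, and $\cR=\overline{\bigcup_k \cR_k}$ where each $\cR_k\cong M_{2^k}(\CC)$ is a dyadic matrix subalgebra whose diagonal lies in $\a$. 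The substance of the proposition is to extract from this data the precise coherent system $\{v_{ij}^k\}$ satisfying the tensor-product compatibility \eqref{tensor}.

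The key steps, in order, are as follows. First, at the base of the induction I would choose a projection $p_1^1\in\cP(\a)$ with $\tau(p_1^1)=1/2$ together with a normalizer $u\in\n_\a$ implementing a partial isometry $v_{12}^1=u\,p_1^1$ from $p_1^1$ onto its complement $p_2^1=1-p_1^1$; setting $u_1^1=1$ and $u_2^1=u$ gives the $2$-system $v_{ij}^1=u_i^1 p_1^1 (u_j^1)^*$. Second, for the inductive step I would assume the $2^k$-system is constructed and refine each diagonal projection $p_j^k=v_{jj}^k$ by splitting it into two halves $p_{2j-1}^{k+1}+p_{2j}^{k+1}=p_j^k$ with equal trace, again using normalizers of $\a$ (which, since $\a$ is Cartan in $\cR$, act transitively enough to realize such halvings by elements of $\n_\a$). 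Third, and this is where the compatibility condition is imposed, I would define the new normalizers $u_i^{k+1}$ so that the refined matrix units $v_{2i-1,2j-1}^{k+1}$ and $v_{2i,2j}^{k+1}$ sum to $v_{ij}^k$; this is exactly the statement that the inclusion $\cR_k\subseteq\cR_{k+1}$ is the standard unital embedding $M_{2^k}(\CC)\hookrightarrow M_{2^k}(\CC)\otimes M_2(\CC)$, under which $e_{ij}\otimes 1 = (e_{ij}\otimes e_{11})+(e_{ij}\otimes e_{22})$.

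The main obstacle I anticipate is step three: ensuring simultaneously that the halving partial isometries at level $k+1$ are implemented by genuine elements of $\n_\a$ \emph{and} that they are chosen coherently with the level-$k$ system so that \eqref{tensor} holds on the nose rather than merely up to unitary conjugation. The natural way around this is to work inside the group measure space construction for $\cR$: the equivalence relation decomposes the diagonal into dyadic intervals, the $v_{ij}^k$ correspond to the transformations matching the $i$-th and $j$-th intervals at the $k$-th dyadic level, and the refinement of intervals is literally the passage from dyadic level $k$ to level $k+1$, which forces \eqref{tensor} automatically. Finally, the assertion that $\{p_j^k\}$ generates all of $\a$ follows because the dyadic projections generate the Cartan masa of $\cR$ and, since $\a$ coincides with that Cartan masa, they generate $\a$; ergodicity of the relation is what guarantees one can start from a single halving and iterate to produce a generating family.
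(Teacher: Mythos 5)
Your proposal is correct and follows the same route the paper intends: the paper offers no written proof, deriving the proposition directly from Popa's result that a semiregular masa in a separable II$_1$ factor is Cartan in a hyperfinite subfactor, and your argument simply fleshes out that derivation by exhibiting the coherent dyadic tower of matrix algebras (equivalently, the dyadic refinement of the underlying equivalence relation) whose standard embeddings $M_{2^k}(\CC)\hookrightarrow M_{2^{k+1}}(\CC)$ give \eqref{tensor} and whose diagonals generate $\a$. Apart from a harmless index transposition in your base case ($v^1_{12}=p_1^1u^*$ rather than $u\,p_1^1$ with the paper's convention $v^k_{ij}=u_i^kp_1^k(u_j^k)^*$), the construction is sound.
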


Matrix units can always be constructed in a II$_1$ factor, but the result in Proposition \ref{iso parc}
allows one to make ``coherent embeddings'',
in a sense made precise in Corollary \ref{repre}.

We denote by $\cD(n)$ the diagonal subalgebra of  $M_n(\CC)$
 and by $E_{\cD(n)}:M_n(\CC)\rightarrow \cD(n)$ the diagonal compression.
We also consider $\phi_k:M_{2^ k}(\CC)\rightarrow
M_{2^{k+1}}(\CC)$ to be the unital *-monomorphism $\phi_k(A)= I_2\otimes A$.
Denote by $\{e^k_{ij}\}$ the canonical
matrix units in $M_{2^k}(\CC)$.

\begin{cor}\label{repre}
Let $\{p^k_j\}$, $\{v^k_{ij}\}$ be
as in Proposition \ref{iso parc}. Define a
family of $*$-monomorphisms $\pi_k:M_{2^k}(\CC)\rightarrow \m$
in the following way: for $a=(a_{ij})\in M_{2^k}(\CC)$, let
\[
\pi_k(a)=\sum_{i,j}\, a_{ij} v^k_{ij}.
\]
Then $\pi_k(e^k_{ii})=p_i^k$ for $i=1,\ldots,2^k$, and
$\pi_k=\pi_{k+1}\circ\phi_k $, $\pi_k\circ E_{\mathcal
D(2^k)}=E_\a\circ\pi_k$, $k\in\NN$.
\end{cor}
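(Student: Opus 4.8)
The plan is to verify the three asserted identities directly from the definition $\pi_k(a)=\sum_{i,j}a_{ij}v^k_{ij}$, using the two structural properties of the matrix units supplied by Proposition \ref{iso parc}: that $\{v^k_{ij}\}$ is genuinely a system of matrix units (so $v^k_{ij}v^k_{rs}=\delta_{jr}v^k_{is}$ and $(v^k_{ij})^*=v^k_{ji}$), and the coherence relation \eqref{tensor}. Each claim is essentially a bookkeeping computation, so I would present them in increasing order of difficulty.

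First I would check that each $\pi_k$ is a $*$-monomorphism. Linearity is immediate. Multiplicativity $\pi_k(a)\pi_k(b)=\pi_k(ab)$ follows by expanding both sides and applying the matrix-unit relation $v^k_{ij}v^k_{rs}=\delta_{jr}v^k_{is}$, which collapses the double sum to exactly the entries of the product $ab$; the $*$-property uses $(v^k_{ij})^*=v^k_{ji}$. Injectivity is automatic since a nonzero $*$-homomorphism out of the simple algebra $M_{2^k}(\CC)$ has trivial kernel. The identity $\pi_k(e^k_{ii})=p^k_i$ is then just the definition specialized to the matrix unit $e^k_{ii}$, using $v^k_{ii}=p^k_i$.

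Next I would treat the coherence relation $\pi_k=\pi_{k+1}\circ\phi_k$. Here it suffices to verify the equality on the basis $\{e^k_{ij}\}$, since both sides are linear. Computing $\phi_k(e^k_{ij})=I_2\otimes e^k_{ij}=e^{k+1}_{2i-1,2j-1}+e^{k+1}_{2i,2j}$ and applying $\pi_{k+1}$ gives $v^{k+1}_{2i-1,2j-1}+v^{k+1}_{2i,2j}$, which by \eqref{tensor} equals $v^k_{ij}=\pi_k(e^k_{ij})$. For the last identity $\pi_k\circ E_{\cD(2^k)}=E_\a\circ\pi_k$, I would again test on $\{e^k_{ij}\}$. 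The left side sends $e^k_{ij}$ to $\delta_{ij}\,p^k_i$. For the right side one computes $E_\a(v^k_{ij})$: since $v^k_{ii}=p^k_i\in\a$ it is fixed by $E_\a$, and for $i\neq j$ the element $v^k_{ij}$ is an off-diagonal matrix unit, so a trace computation against any projection in $\a$ shows $E_\a(v^k_{ij})=0$.

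The only point requiring genuine care, rather than mechanical expansion, is establishing $E_\a(v^k_{ij})=0$ for $i\neq j$; this is where I expect the main obstacle to lie. The cleanest argument uses that $v^k_{ij}=u^k_i p^k_1 (u^k_j)^*$ with $u^k_i,u^k_j\in\n_\a$ normalizing $\a$, so that $E_\a(v^k_{ij})$ lies in $\a$ and can be pinned down by the trace-invariance $\tau(E_\a(v^k_{ij})\,q)=\tau(v^k_{ij}\,q)$ for $q\in\cP(\a)$; using $v^k_{ij}v^k_{ji}=p^k_i$ together with orthogonality of the diagonal projections forces the expectation to vanish. Once this is in hand the identity on matrix units follows, and linearity extends all three conclusions to the whole of $M_{2^k}(\CC)$.
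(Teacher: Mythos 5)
The paper states this corollary without proof, and your direct verification---matrix-unit relations for multiplicativity and the $*$-property, simplicity of $M_{2^k}(\CC)$ for injectivity, the coherence relation \eqref{tensor} for $\pi_k=\pi_{k+1}\circ\phi_k$ (reading $I_2\otimes e^k_{ij}=e^{k+1}_{2i-1,2j-1}+e^{k+1}_{2i,2j}$, which is indeed the tensor convention the paper uses), and the vanishing of $E_\a(v^k_{ij})$ for $i\neq j$---is exactly the routine argument the authors leave implicit. The one step you flag as delicate is correct but can be done even more directly: $v^k_{ij}=p^k_i\,v^k_{ij}\,p^k_j$, so the $\a$-bimodule property of $E_\a$ and commutativity of $\a$ give $E_\a(v^k_{ij})=p^k_i\,E_\a(v^k_{ij})\,p^k_j=E_\a(v^k_{ij})\,p^k_i p^k_j=0$, with no trace computation needed.
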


For every
$k\in \NN$ let $\{I_i^k\}_{i=1}^{2^k}$ denote the dyadic partition
of $[0,1]$ given by $I^k_i=[(i-1)2^{-k},i\,2^{-k})$.

\begin{rem} \label{el x}
To each family $\{\,\{p_i^k\}_{i=1}^{2^k}:\ k\in\NN\}\subseteq \a$
as in Proposition \ref{iso parc} we associate an operator $x$ in the
following way. It is easy to see that the sequence of discrete
positive operators $x_k=\sum_{i=1}^{2^k} \frac{i}{2^k} \ p_i^k\in
\a^+$ is non-increasing and bounded. Let $x=\lim_{\rm SOT}x_k \in
\a^+$. Then, for every $k\in \NN$ and $0\leq i\leq 2^k$,
$p^x(I_i^k)=p_i^k$. In particular, $\tau\circ p^x$ is the Lebesgue
measure restricted to $[0,1]$.
   We say that $x$ is {\bf the associated operator} to the family
   $\{p_i^k\}$. Notice that the von Neumann sub-algebra generated by $x$
   coincides with $\a$, since the projections $p_j^k$ are Borel
   functional calculus of $x\in \a$.
\end{rem}

\section{Main results}\label{cuatro}

Two subalgebras $\a,\b\subset\m$ are said to be {\bf orthogonal} \cite{popa-ortog} in $\m$ if
$E_\a(\b)\subset\CC\,I$.

\begin{fed}
We say that a masa $\a\subset\m$ is {\bf totally complementable} if for every
projection $p\in\a$, the masa $p\a$ in $p\m p$ admits a diffuse
orthogonal subalgebra.
\end{fed}

\begin{teo}[Carpenter's theorem for discrete  operators]\label{teo tipo disc}
If $\a$ is a totally complementable masa in the II$_1$ factor $\m$, then for every discrete $a\in(\a)^+_1$
there exists a projection $p\in\m$ such that $E_\a(p)=a$.
\end{teo}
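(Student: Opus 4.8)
The plan is to reduce the problem to the case of a single spectral value and to solve that case directly using total complementability. Since $a\in\a^+_1$ is discrete, its spectral measure is purely atomic, so I can write $a=\sum_{i}\lambda_i q_i$, where the $\lambda_i\in[0,1]$ are the distinct eigenvalues, the $q_i=p^a(\{\lambda_i\})\in\cP(\a)$ are mutually orthogonal with $\sum_i q_i=1$, and the sum converges in the strong operator topology. It then suffices to produce, for each $i$, a projection $p_i\leq q_i$ with $E_\a(p_i)=\lambda_i q_i$: the partial sums of $\sum_i p_i$ form an increasing sequence of projections bounded by $1$, hence converge SOT to a projection $p\in\cP(\m)$, and since $E_\a$ is normal I get $E_\a(p)=\sum_i\lambda_i q_i=a$.

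For the single-value step, fix a spectral projection $q\in\cP(\a)$ and a scalar $\lambda\in[0,1]$; I want $p\leq q$ with $E_\a(p)=\lambda q$. This is where total complementability enters: the masa $q\a$ in $q\m q$ admits a diffuse subalgebra $\b$ orthogonal to it, that is, $E_{q\a}(\b)\subseteq\CC q$. For any $b\in\b$ the element $E_{q\a}(b)$ is a scalar multiple of $q$, and applying $\tau$ (which $E_{q\a}$ preserves) pins down the scalar, giving $E_{q\a}(b)=\tfrac{\tau(b)}{\tau(q)}\,q$. Because $\b$ is diffuse, the restriction of $\tau$ to $\b$ is non-atomic, so $\b$ contains a projection $p$ of every trace in $[0,\tau(q)]$; choosing $\tau(p)=\lambda\,\tau(q)$ yields $E_{q\a}(p)=\lambda q$. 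Finally, since $q\in\a$, the expectation $E_\a$ restricts on the corner $q\m q$ to $E_{q\a}$ (one checks $E_\a(p)=E_\a(qpq)=qE_\a(p)q\in q\a$ and that this agrees with $E_{q\a}(p)$), so $E_\a(p)=\lambda q$, as required.

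I expect the genuinely delicate points to be bookkeeping rather than conceptual. The main one is justifying the assembly of the countably many local projections: I must verify that $\sum_i p_i$ converges SOT to a projection and that the normal expectation $E_\a$ commutes with this limit, so that no mass is lost in passing from the finite to the countable decomposition. A secondary point, needed in the single-value step, is the compatibility $E_\a|_{q\m q}=E_{q\a}$ for $q\in\cP(\a)$, together with the fact that a diffuse abelian algebra realizes every intermediate trace value; both are standard, but should be stated explicitly since the entire reduction rests on them.
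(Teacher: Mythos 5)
Your proposal is correct and follows essentially the same route as the paper: reduce to a single spectral projection $q$ and scalar $\lambda$, use the diffuse subalgebra orthogonal to $q\a$ in $q\m q$ to find a projection of trace $\lambda\tau(q)$ whose expectation is forced to be $\lambda q$, check the compatibility $E_\a|_{q\m q}=E_{q\a}$, and assemble the pieces by normality of $E_\a$. The only difference is cosmetic (you decompose $a$ via its atomic spectral measure, the paper writes $a=\sum_k\alpha_k p_k$ directly), so there is nothing to add.
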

\begin{proof}
Assume $\b\subset\m$ is a subalgebra orthogonal to $\a$.
For any $\alpha\in[0,1]$, there exists a projection $q\in\b$ with
$\tau(q)=\alpha$. Since $\a$ and $\b$ are orthogonal,
$E_\a(q)=\tau(E_\a(q))=\tau(q)=\alpha$.

Now let $p\in\a$ be a projection; then $p\a$ is a masa in $p\m p$,
so it admits an orthogonal subalgebra $\b_p$. By the first paragraph, there exists a
projection $q\in \b_p\subset p\m p$ with $E_{p\a}(q)=\alpha\,p$. Since $q\in  p\m
p$, in particular $q=pq$. So
\[E_\a(q)=E_\a(pq)=p\,E_\a(q)=E_{p\a}(q)=\alpha\,p.\]

Now let $a=\sum_k \alpha_k\,p_k$ where $\{p_k\}_{k\in\NN}$ is a
sequence of mutually orthogonal projections in $\a$ and
$\{\alpha_k\}_{k\in\NN}$ is a
 sequence of numbers. Since $0\leq a\leq 1$, we have $0\leq \alpha_k\leq 1$.
For each $k\in \NN$ apply the first part of the proof to get a
projection $q_k\in\m^+$ such that $E_\a(q_k)=\alpha_k \,p_k$,
$q_k\leq p_k$. Thus, the operator $q=\sum_kq_k\in \m$ is a
projection such that $E_\a(q)=\sum_k \alpha_k\,p_k$.
\end{proof}

\begin{rems}
\ \smallskip
\begin{enumerate}
\item The conditions in Theorem \ref{teo tipo disc} are satisfied
by a
Cartan masa of the hyperfinite II$_1$ factor, and so by any semiregular
masa in a separable II$_1$ factor, since it is Cartan in an intermediate hyperfinite subfactor
\cite[Proposition 3.6]{popa-kad}.
\item Because in general
there is no clear ``coherent'' way of constructing the
projections $q_k$ in the previous proof, we would not expect such
argument to be useful to prove the general case of the Carpenter's
theorem.
\item Under the conditions of Theorem \ref{teo tipo
disc}, it follows in particular that there exists a projection
$p\in \a$ such that $$E_\a(p)=\frac{1}{\sqrt{2}} \, I.$$
Remarkably, it seems hard to prove even this particular case of Kadison's
conjecture in the general case of an arbitrary II$_1$ factor and a
masa $\a\subseteq \m$.
\end{enumerate}
\end{rems}

\bigskip

In the remainder of the paper, given a semiregular masa $\a$ of the
separable II$_1$ factor $\m$, we
 will prove the Carpenter's Theorem for some
 non-discrete operators, namely piece-wise linear functional calculus of
$x$, the associated operator of a family of projections  considered
in Remark \ref{el x}.

We begin by defining the following sequence of unitary matrices
$(W_n)_n$:
\begin{equation*}
W_1=\begin{pmatrix} 1 & 0 & 0 & 0 \\
                    0 & \frac{1}{\sqrt{2}}  & -\frac{1}{\sqrt{2}} & 0\\
                    0 & \frac{1}{\sqrt{2}} & \frac{1}{\sqrt{2}} & 0 \\
                    0 & 0 & 0 & 1 \end{pmatrix}, \ \ \ \ \
W_{n+1}=W_n\otimes I_2
=\begin{pmatrix}
W_n&0\\0&W_n
\end{pmatrix}=\bigoplus_{j=1}^{2^{n}}W_1
\end{equation*}

\begin{lem}\label{iteracion}
Let $A\in M_{2^k}(\CC)$. Put
$A(1)=A$, $A(n+1)= W_{k+n-1} (I_2\otimes A(n))
W_{k+n-1}^*$. Then there exists  $\lambda<1 $, independent of $A$, $k$ and $n$ such that
\begin{equation*}
\frac{1}{2} \|A(n+1)-I_2\otimes A(n)\|_2^2\leq \, \lambda\,
\|A(n)-I_2\otimes A(n-1)\|_2^2\end{equation*}
\end{lem}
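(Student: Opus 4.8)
The plan is to use the unitary invariance of $\|\cdot\|_2$ to turn each difference into a commutator, and then to extract a fixed contraction from the single $4\times4$ rotation $W_1$ out of which every $W_m$ is built. Put $D_n:=A(n)-I_2\otimes A(n-1)$, so that the assertion reads $\tfrac12\|D_{n+1}\|_2^2\le\lambda\,\|D_n\|_2^2$. Since $W_{k+n-1}$ is unitary and conjugation by a unitary is a $\|\cdot\|_2$-isometry,
\[
\|D_{n+1}\|_2=\bigl\|\,W_{k+n-1}(I_2\otimes A(n))W_{k+n-1}^*-I_2\otimes A(n)\,\bigr\|_2=\bigl\|\,[\,W_{k+n-1}\,,\,I_2\otimes A(n)\,]\,\bigr\|_2 ,
\]
and in the same way $\|D_n\|_2=\bigl\|[\,W_{k+n-2},I_2\otimes A(n-1)\,]\bigr\|_2$. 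The whole map $A\mapsto A(n)$ is linear, hence so is $D_n\mapsto D_{n+1}$ for fixed $k,n$; the lemma amounts to bounding the $\|\cdot\|_2$-norm of this linear map by $\sqrt{2\lambda}$, uniformly in $A$, $k$ and $n$.

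The key structural input is that $W_{k+n-1}$ is block diagonal, a direct sum of copies of $W_1$, and that $W_1$ merely fixes two of its four coordinates while rotating the remaining pair by $\pi/4$. Writing $C:=I_2\otimes A(n)$ in $4\times4$ blocks $C_{bb'}$ matching this decomposition, one obtains the exact formula
\[
\|[\,W_{k+n-1},C\,]\|_2^2=\sum_{b,b'}\bigl\|\,W_1\,C_{bb'}\,W_1^*-C_{bb'}\,\bigr\|_2^2 ,
\]
so that only the part of each block $C_{bb'}$ failing to commute with conjugation by $W_1$ survives. I would then observe that the portion of $A(n)$ already symmetric across each rotated coordinate pair --- precisely the part assembled by the earlier conjugations --- lies in the commutant of $W_{k+n-1}$ and drops out, while the remaining, non-symmetric portion is exactly the refinement $I_2\otimes(\cdot)$ of the previous fluctuation $D_n$. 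This collapses the inequality to a single, size-independent estimate for the fixed rotation $W_1$ acting by conjugation on the relevant blocks; the constant $\lambda$ is then read off from the fixed linear map $X\mapsto W_1XW_1^*-X$ (governed by the fixed rotation angle $\pi/4$), and because the reduced statement no longer involves $A$, $k$ or $n$ it is automatically uniform, so that one only has to check $\lambda<1$.

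The main obstacle is this middle step: organizing the bookkeeping of the two interleaved tensor structures --- the refinement $I_2\otimes(\cdot)$ and the coordinate pattern rotated by $W_{k+n-1}$ --- so that the already-symmetrized (stable) part of $A(n)$ genuinely cancels in the commutator, leaving a clean multiple of $D_n$. One must verify that the refinement interacts with $W_1$ in a scale-invariant fashion, so that the per-block loss is the same at every level; this is what forces $\lambda$ to be independent of $n$. Once the problem is honestly reduced to the fixed $W_1$-block, the factor $\tfrac12$ merely absorbs the doubling of the matrix size in passing from $A(n)$ to $A(n+1)$ (so that equality with $\lambda=1$ would hold were $W_1$ trivial), and the strict inequality $\lambda<1$ records the genuine residual contraction produced by the $\pi/4$-rotation.
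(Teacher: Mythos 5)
Your structural reduction is genuine and, as far as it goes, correct: since $I_2\otimes\bigl(I_2\otimes A(n-1)\bigr)$ has $4\times 4$ blocks that are scalar multiples of $I_4$, it commutes with $W_{k+n-1}=\bigoplus_j W_1$, and hence, writing $D_n=A(n)-I_2\otimes A(n-1)$, one really does have
\begin{equation*}
D_{n+1}=W_{k+n-1}\,(I_2\otimes D_n)\,W_{k+n-1}^*-I_2\otimes D_n ,
\end{equation*}
so the problem collapses blockwise to the fixed map $\Phi(C)=W_1CW_1^*-C$ acting on $4\times4$ matrices of the form $I_2\otimes X$. The gap is in your final step: the constant you propose to ``read off'' from this fixed map is \emph{not} less than $1$. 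By the computation \eqref{eq1 norm}, $\|\Phi(I_2\otimes X)\|_2^2=(4-2\sqrt2)(|x_{12}|^2+|x_{21}|^2)+|x_{11}-x_{22}|^2$ for a $2\times2$ block $X$, and taking $X=\mathrm{diag}(1,-1)$ gives $\tfrac12\|\Phi(I_2\otimes X)\|_2^2=2=\|X\|_2^2$. So the best uniform bound obtainable from the one-step map on the subspace $\{I_2\otimes X\}$ is exactly $\lambda=1$: the $\pi/4$-rotation by itself produces no strict contraction in the ``diagonal-difference'' direction, and your claim that strictness ``records the residual contraction produced by the rotation'' is where the argument fails.

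To obtain $\lambda<1$ you must use that $D_n$ is not arbitrary but lies in the range of the previous-step map, which constrains the diagonal parts of its $2\times2$ blocks. This is precisely what the paper's proof does: it never estimates $D_{n+1}$ against $D_n$ via an operator norm, but instead computes, on the \emph{same} $2\times2$ block $B$ of $A(n-1)$, the two quadratic forms $\|B(2)-I_2\otimes B\|_2^2=(4-2\sqrt2)(|b_{12}|^2+|b_{21}|^2)+|b_{11}-b_{22}|^2$ and $\|B(3)-I_2\otimes B(2)\|_2^2=(4-2\sqrt2)(|b_{12}|^2+|b_{21}|^2)+(\tfrac52-\sqrt2)|b_{11}-b_{22}|^2$ (equations \eqref{eq1 norm} and \eqref{eq2 norm}). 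Over one iteration the off-diagonal coefficient does not grow at all and the diagonal-difference coefficient grows only from $1$ to $\tfrac52-\sqrt2<2$, so it is only this \emph{two-step} comparison, combined with the factor $\tfrac12$, that yields $\lambda=\tfrac12(\tfrac52-\sqrt2)<1$. Your outline stops one step short of this; to repair it you would need to bound $\Phi$ restricted to the range of $X\mapsto\Phi(I_2\otimes X)$, which amounts to the same computation.
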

\begin{proof}
Let $k\geq 1$ and  $n\geq 2$. We can consider $A(n-1)$ as a block matrix with $2\times2$ blocks, i.e.
$A(n-1)=(A_{ij})_{ij}$ where $A_{ij}\in M_2(\CC)$ for $1\leq
i,\,j\leq 2^{(k+n-3)}$. It is easy to verify that
\[I_2\otimes A(n-1)=(I_2\otimes
A_{ij})_{ij}\ \text{ and } \ A(n)=(W_1(I_2\otimes
A_{ij})W_1^*)_{ij}=(A_{ij}(2))_{ij}.\]
 So in particular we have that
\begin{equation}\label{anmenos1} \|A(n)-I_2\otimes A(n-1)\|_2 ^2
=\sum_{i,\,j=1}^{\ \ 2^{(k+n-3)}}\| A_{ij}(2)-I_2\otimes A_{ij}\|_2^2
\end{equation}
 Similarly we see that $ A(n+1)=(A_{ij}(3))_{ij}\, , \ \ \text{ for } 1\leq i,\,j\leq 2^{k+n-3}$ and
 \begin{equation}\label{anmas1} \|A(n+1)-I_2\otimes A(n)\|_2 ^2
 =\sum_{i,\,j=1}^{\ \ 2^{(k+n-3)}}\| A_{ij}(3)-I_2\otimes A_{ij}(2)\|_2^2.
 \end{equation}
So, from \eqref{anmenos1} and \eqref{anmas1} we see that it is enough to prove that
there exists $0<\lambda<1$ (independent of $A$, $k$ and $n$) such that for every $1\leq i,\, j\leq 2^{k+n-3}$,
\begin{equation*}
\frac{1}{2} \|A_{ij}(3)-I_2\otimes A_{ij}(2)\|_2^2 \leq \, \lambda\,
\|A_{ij}(2)-I_2\otimes A_{ij}\|_2^2.
\end{equation*} We show that such inequality holds for any $2\times 2$ matrix $B=(b_{ij})_{ij}\in M_2(\CC)$.
By  straightforward computations,
\begin{equation*} B(2)=W_1\,(I_2\otimes\,B)\,W_1^*=
\begin{pmatrix} b_{11} &
\frac{-b_{12}}{\sqrt{2}} &  \frac{b_{12}}{\sqrt{2}} & 0 \\ &\\
\frac{-b_{21}}{\sqrt{2}} & \frac{b_{11}+b_{22}}{2} &
\frac{b_{11}-b_{22}}{2} & \frac{b_{12}}{\sqrt{2}} \\ &\\
\frac{b_{21}}{\sqrt{2}} & \frac{b_{11}-b_{22}}{2} &
\frac{b_{11}+b_{22}}{2} & \frac{b_{12}}{\sqrt{2}} \\  &\\
0 &
\frac{b_{21}}{\sqrt{2}} &  \frac{b_{21}}{\sqrt{2}} & b_{22}
\end{pmatrix}
\end{equation*}
and so
\begin{equation}\label{eq1 norm} \| B(2)- I_2\otimes
B\|_2^2=
(4-2\sqrt{2})(|b_{12}|^2+|b_{21}|^2)+|b_{11}-b_{22}|^2.\end{equation}

\medskip

Thus, if we consider $B(2)=(B_{ij})_{ij}$ as a $2\times2$ block matrix, where $B_{ij}\in
M_2(\CC)$, we can use the previous calculation with each of these four
matrices and get
\begin{equation}\label{eq2 norm}
\frac{1}{2}\|B(3)-I_2\otimes
B(2)\|_2^2=\frac{1}{2}((4-2\sqrt{2})(|b_{12}|^2+|b_{21}|^2)+(\frac{5}{2}-\sqrt{2})\,|b_{11}-b_{22}|^2
)
\end{equation}
Writing $\frac{5}{2}-\sqrt{2}=1+(\frac{3}{2}-\sqrt{2})$ and
using \eqref{eq1 norm} and \eqref{eq2 norm} we get that
$$\frac{1}{2} \frac{\|B(3)-I_2\otimes B(2)\|_2^2 }{\| W_1
(I_2\otimes B)W_1^*- I_2\times B\|_2^2}\leq \frac{1}{2}(1 +
\frac{3}{2}- \sqrt{2})< 1.\qedhere $$
\end{proof}

In what follows we denote by $\{f^k_i\}_{i=1}^{2^k}$ the rank-one projections associated with the elements of the
canonical basis of $\CC^{2^k}$ that is $f_i^k=e^k_{ii}$.

\begin{lem}\label{lema diagonal}
Let $n\in\NN$ and $A\in M_{2^k}(\CC)$. Then, with the notations of Lemma \ref{iteracion}:

\begin{enumerate}
\item  $ E_{\cD(2^{k+n})}(A(n+1))= E_{\cD(2^{k+n})}(  W_{k+n-1}\,(I_2\otimes E_{\cD(2^{k+n-1})}(A(n)))\,W_{k+n-1}^* ) $
\item\label{conjugado diagonal}If $A$ is diagonal and $B=W^{\phantom{*}}_{k-1}\,A\,W_{k-1}^*$, then
\[
B_{ii}=\left\{\begin{array}{ll}A_{ii}&\mbox{ if }i=4h\mbox{ or }i=4h-3\\
\frac12\,(A_{4h-1,4h-1}+A_{4h-2,4h-2})&\mbox{ if }i=4h-1\mbox{ or }i=4h-2\end{array}\right.
\]
\item\label{diagonal} If
$E_{\cD(2^k)}(A)=\sum_{\ell=1}^{2^k}\,d_\ell\,f^k_{\ell}$, then
\[
\begin{array}{rcl}
E_{\cD(2^{k+n-1})}(A(n))&=&\ds\sum_{\ell=1}^{2^{k-1}}\sum_{h=1}^{2^{n-1}}\,
\gamma^{n}_{\ell,h-1}\, f^{k+n-1}_{2^n(\ell-1)+2h-1}+\gamma^{n}_{\ell,h}\,f^{k+n-1}_{2^n(\ell-1)+2h}
\end{array}
\]

where
\[
\gamma^{n}_{\ell,h}=d_{2\ell-1}+\frac{h}{2^{n-1}}\,(d_{2\ell}-d_{2\ell-1}).
\]
\end{enumerate}
\end{lem}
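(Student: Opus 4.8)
The plan is to prove the three items in the logical order (ii), (i), (iii): items (i) and (ii) are independent direct computations, while (iii) is an induction on $n$ that uses both. The common engine is the block structure $W_{m}=\bigoplus_{h} W_1$, so that conjugation by any $W_m$ acts independently on consecutive $4\times4$ diagonal blocks, and the only datum one ever needs is the diagonal of $W_1 C W_1^*$ for a $4\times4$ matrix $C$. A one-line computation from the form of $W_1$ gives that this diagonal is $\big(C_{11},\ \tfrac12(C_{22}+C_{33}-C_{23}-C_{32}),\ \tfrac12(C_{22}+C_{33}+C_{23}+C_{32}),\ C_{44}\big)$; everything below is an application of this.

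For item (ii), since $W_{k-1}=\bigoplus_{h=1}^{2^{k-2}}W_1$ and $A$ is diagonal, I apply the engine to each $4\times4$ block $\mathrm{diag}(A_{4h-3,4h-3},\dots,A_{4h,4h})$ (here $C_{23}=C_{32}=0$). The formula collapses to $\big(A_{4h-3,4h-3},\tfrac12(A_{4h-2,4h-2}+A_{4h-1,4h-1}),\tfrac12(A_{4h-2,4h-2}+A_{4h-1,4h-1}),A_{4h,4h}\big)$, which is precisely the stated description of $B_{ii}$. For item (i), I write $A(n+1)=W_{k+n-1}(I_2\otimes A(n))W_{k+n-1}^*$ and show the diagonal of the right-hand side depends only on the diagonal of $A(n)$; replacing $A(n)$ by $E_{\cD(2^{k+n-1})}(A(n))$ then changes nothing. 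The key observation (using the internal tensor convention of the proof of Lemma~\ref{iteracion}) is that the $b$-th diagonal $4\times4$ block of $I_2\otimes A(n)$ is $I_2\otimes (A(n))_{bb}$, where $(A(n))_{bb}$ is a $2\times2$ block; for any $2\times2$ matrix $C$ the engine gives that the diagonal of $W_1(I_2\otimes C)W_1^*$ is $\big(C_{11},\tfrac12(C_{11}+C_{22}),\tfrac12(C_{11}+C_{22}),C_{22}\big)$, because the off-diagonal entries of $C$ occupy the $(1,2),(2,1),(3,4),(4,3)$ slots of $I_2\otimes C$, none of which enters the diagonal after conjugation. Thus only the diagonal entries of $A(n)$ survive, which is exactly item (i).

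For item (iii) I induct on $n$. The base case $n=1$ is immediate: $A(1)=A$ and, since $\gamma^1_{\ell,0}=d_{2\ell-1}$ and $\gamma^1_{\ell,1}=d_{2\ell}$, the claimed sum reduces to $\sum_{m}d_m f^k_m=E_{\cD(2^k)}(A)$. For the step, put $D=E_{\cD(2^{k+n-1})}(A(n))$; by the inductive hypothesis its diagonal inside the $\ell$-th block (length $2^n$) reads $\gamma^n_{\ell,0},\gamma^n_{\ell,1},\gamma^n_{\ell,1},\gamma^n_{\ell,2},\dots,\gamma^n_{\ell,2^{n-1}}$. By item (i), $E_{\cD(2^{k+n})}(A(n+1))=E_{\cD(2^{k+n})}\big(W_{k+n-1}(I_2\otimes D)W_{k+n-1}^*\big)$. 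The tensor $I_2\otimes D$ doubles each consecutive pair, so inside the $\ell$-th block (now length $2^{n+1}$) the $4\times4$ group indexed by $h$ is $(\gamma^n_{\ell,h-1},\gamma^n_{\ell,h},\gamma^n_{\ell,h-1},\gamma^n_{\ell,h})$; applying the averaging rule of item (ii) fixes the outer entries and turns both middle entries into $\tfrac12(\gamma^n_{\ell,h-1}+\gamma^n_{\ell,h})$. The two refinement identities $\gamma^n_{\ell,h}=\gamma^{n+1}_{\ell,2h}$ and $\tfrac12(\gamma^n_{\ell,h-1}+\gamma^n_{\ell,h})=\gamma^{n+1}_{\ell,2h-1}$, both immediate from the linearity of $h\mapsto\gamma^n_{\ell,h}$, then rewrite this group as $(\gamma^{n+1}_{\ell,2h-2},\gamma^{n+1}_{\ell,2h-1},\gamma^{n+1}_{\ell,2h-1},\gamma^{n+1}_{\ell,2h})$. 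Concatenating over $h$ yields exactly the staircase $\gamma^{n+1}_{\ell,0},\gamma^{n+1}_{\ell,1},\gamma^{n+1}_{\ell,1},\dots$ prescribed by the level-$(n+1)$ formula.

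The \emph{main obstacle} is not analytic but combinatorial: correctly aligning three index patterns in the inductive step — the pair-doubling induced by $I_2\otimes(\cdot)$, the $4\times4$ grouping used by the averaging rule (ii), and the global positions $2^{n+1}(\ell-1)+2h'-1,\ 2^{n+1}(\ell-1)+2h'$ in the target formula. The alignment works precisely because $4\mid 2^{n+1}$ (valid since $n\ge1$ in the step), so the $4\times4$ blocks of $W_{k+n-1}$ sit inside single $\ell$-blocks and coincide with the doubled pairs. Once this is checked, the whole computation reduces to the two elementary interpolation identities above.
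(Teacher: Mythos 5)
Your proof is correct and follows essentially the same route as the paper: reduce to the $4\times4$ diagonal blocks of $W_{k+n-1}=\bigoplus_j W_1$, observe that only the $(2,3)$ and $(3,2)$ off-diagonal slots can influence the diagonal under conjugation by $W_1$ (so the diagonal of $A(n+1)$ depends only on the diagonal of $A(n)$), and run the induction for (iii) via the two refinement identities $\gamma^{n}_{\ell,h}=\gamma^{n+1}_{\ell,2h}$ and $\tfrac12(\gamma^n_{\ell,h-1}+\gamma^n_{\ell,h})=\gamma^{n+1}_{\ell,2h-1}$. The only quibble is that under the tensor convention actually used in the proof of Lemma \ref{iteracion} (read off from the displayed $B(2)$) the off-diagonal entries of a $2\times2$ block $C$ occupy slots $(1,3),(2,4),(3,1),(4,2)$ of $I_2\otimes C$ rather than $(1,2),(2,1),(3,4),(4,3)$, but since neither set meets $\{(2,3),(3,2)\}$ this does not affect your argument or its conclusion.
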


\begin{proof}
To prove (i) let $k,\,n\geq 1$ and consider the block representations  $A(n)=(A_{ij})_{i,j=1}^{2^{k+n-2}}$ where $A_{ij}\in M_2(\CC)$.
Then $I_2\otimes A(n)=(I_2\otimes A_{ij})_{ij=1}^{2^{k+n-2}}$ and
\[
A(n+1)=W_{k+n-1}(I_2\otimes A(n))W_{k+n-1}^*=(W_1 \,(I_2\otimes A_{ij})\,W_1^*)_{ij=1} ^{2^{k+n-2}}
\]
with respect to the previous block representation. Hence, to study the diagonal of $A(n+1)$ we can restrict
our attention to the diagonal blocks $W_1\,(I_2\otimes A_{ii})\,W_1^*\in M_4(\CC)$, for $i=1,\ldots,2^{k+n-2}$.
Straightforward computations show that $$E_{\cD(4)}( W_1\,(I_2\otimes A_{ii})\,W_1^*)=
 E_{\cD(4)}( W_1\,E_{\cD(4)}(I_2\otimes A_{ii})\,W_1^*)$$ from which (i) follows, after noting that
 $E_{\cD(4)}(I_2\otimes B)=I_2\otimes E_{\cD(2)}(B)$ for any $B\in M_2(\CC)$.

The proof of \eqref{conjugado diagonal} is straightforward.

We prove \eqref{diagonal} by induction. The case $n=1$ follows from the
definitions and hence we omit it. Now, assume that \eqref{diagonal} holds for $A(n)$. Then
\[
\begin{array}{rcl}
I_2\otimes E_{\cD(2^{k+n-1})}(A(n))
&=&\ds\sum_{\ell=1}^{2^{k-1}}\sum_{h=1}^{2^{n-1}}\,
\gamma^{n}_{\ell,h-1}\, I_2\otimes f^{k+n-1}_{2^n(\ell-1)+2h-1} \\ \ \\
&&+\gamma^{n}_{\ell,h}\,I_2\otimes f^{k+n-1}_{2^n(\ell-1)+2h}\\ \ \\
&=&\ds\sum_{\ell=1}^{2^{k-1}}\sum_{h=1}^{2^{n-1}}\,
\gamma^{n}_{\ell,h-1}\, (f^{k+n}_{(\ell-1)2^{n+1}+4h-3}+f^{k+n}_{(\ell-1)2^{n+1}+4h-2}) \\ \ \\
&&+\gamma^{n}_{\ell,h}\,(f^{k+n}_{(\ell-1)2^{n+1}+4h-1}+f^{k+n}_{(\ell-1)2^{n+1}+4h})\\ \ \\
\end{array}
\]
Using \eqref{conjugado diagonal} and the relations
\[
\gamma^{n}_{\ell,h}=\gamma^{n+1}_{\ell,2h},\ \ \ \ \ \frac12(\gamma^n_{\ell,h-1}+\gamma^n_{\ell,h})
=\gamma^{n+1}_{\ell,2h-1},
\]
we have
\begin{eqnarray*}
E_{\cD(2^{k+n})}(A(n+1))&=&E_{\cD(2^{k+n})}(W_{k+n-1}\,(I_2\otimes E_{\cD(2^{k+n-1})}(A(n)))\,W_{k+n-1}^*)
\end{eqnarray*}
\begin{eqnarray*}
&=&\ds\sum_{\ell=1}^{2^{k-1}}\sum_{h=1}^{2^{n-1}}\,
\gamma^{n}_{\ell,h-1}\, f^{k+n}_{(\ell-1)2^{n+1}+4h-3}+ \frac12(\gamma^n_{\ell,h-1}
+\gamma^n_{\ell,h})\,f^{k+n}_{(\ell-1)2^{n+1}+4h-2} \\ \ \\
&&\ds+\frac12(\gamma^n_{\ell,h-1}+\gamma^n_{\ell,h})\,f^{k+n}_{(\ell-1)2^{n+1}+4h-1}
+\gamma^n_{\ell,h}\,f^{k+n}_{(\ell-1)2^{n+1}+4h}\\ \ \\
&=&\ds\sum_{\ell=1}^{2^{k-1}}\sum_{h=1}^{2^{n-1}}\,
\gamma^{n+1}_{\ell,2h-2}\, f^{k+n}_{(\ell-1)2^{n+1}+4h-3}
+ \gamma^{n+1}_{\ell,2h-1}\,f^{k+n}_{(\ell-1)2^{n+1}+4h-2} \\ \ \\
&&\ds+\gamma^{n+1}_{\ell,2h-1}\,f^{k+n}_{(\ell-1)2^{n+1}+4h-1}
+\gamma^{n+1}_{\ell,2h}\,f^{k+n}_{(\ell-1)2^{n+1}+4h}\\ \ \\
&=&\ds\sum_{\ell=1}^{2^{k-1}}\sum_{h=1}^{2^{n}}\,
\gamma^{n+1}_{\ell,h-1}\, f^{k+n}_{2^n(\ell-1)+2h-1}+\gamma^{n+1}_{\ell,h}\,f^{k+n}_{2^n(\ell-1)+2h}\qedhere
\end{eqnarray*}
\end{proof}

\begin{teo}[Carpenter's theorem for some non-discrete operators]\label{iteracion x}
Let $\m$ be a separable II$_1$ factor and let $x\in \a^+$ be the
associated operator to a family $\{p_i^k\}$ of projections in a
semiregular masa $\a$ in $\m$. If $A\in M_{2^k}(\CC)$ then the
sequence $(a_n)_{n\in \NN}\subseteq \m$ given by $a_1=\pi_k(A)$ and
$$a_{n+1}=\pi_{k+n}(A(n+1))=\pi_{k+n}(W_{n+k-1}) \ \pi_{k+n}(A(n))\
\pi_{k+n}(W_{n+k-1})^*$$ converges strongly to an operator
$a\in \m$. Moreover, we have that
\begin{enumerate}
\item if $A$ is projector (resp. self-adjoint, positive) then so is
$a$;
\item if $A_{jj}=d_j$ and $f:[0,1]\rightarrow\CC$ is the piecewise linear function given by
\[
f(t)=d_{2j-1}+2^{k-1}\left(t-\frac{j-1}{2^{k-1}}\right)\,(d_{2j}-d_{2j-1}), \ \ \ \
t\in\left[\frac{j-1}{2^{-(k-1)}},\frac{j}{2^{-(k-1)}}\right),
\]$j=1,\ldots,2^{k-1}$,
then $E_\a(a)=f(x)$;
\item if $B\in M_{2^k}$ and $b=\lim_n \pi_{n+k-1}(B(n))$ then
$\|b-a\|_2^2=\frac{1}{2^k}\|B-A\|_2^2$.
\end{enumerate}
\end{teo}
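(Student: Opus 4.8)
The plan is to write $a_n=\pi_{k+n-1}(A(n))$ and to exploit that each $\pi_j$ is a unital $*$-monomorphism preserving the normalized trace: by Corollary \ref{repre} and Remark \ref{el x} one has $\pi_j(e^j_{ii})=p_i^j$ with $\tau(p_i^j)=2^{-j}$ and $\sum_i p_i^j=1$, so $\tau\circ\pi_j$ is the normalized trace of $M_{2^j}(\CC)$, whence $\|\pi_j(C)\|_2^2=\frac{1}{2^j}\|C\|_2^2$ for $C\in M_{2^j}(\CC)$ (here $\|\cdot\|_2$ on matrices is the unnormalized Hilbert--Schmidt norm used in Lemma \ref{iteracion}). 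Since $A(n)$ is a unitary conjugate of $I_2\otimes A(n-1)$ we get $\|a_n\|=\|A\|$ for all $n$, so the sequence is uniformly bounded and it suffices to prove it is $\|\cdot\|_2$-Cauchy; a uniformly bounded $\|\cdot\|_2$-Cauchy sequence in $\m$ converges in $\|\cdot\|_2$, equivalently in the strong operator topology, to an element $a\in\m$. From $\pi_{k+n-1}=\pi_{k+n}\circ\phi_{k+n-1}$ we have $a_{n+1}-a_n=\pi_{k+n}(A(n+1)-I_2\otimes A(n))$, hence
\[
\|a_{n+1}-a_n\|_2^2=\tfrac{1}{2^{k+n}}\,\|A(n+1)-I_2\otimes A(n)\|_2^2 .
\]

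The subtle point, which I expect to be the main obstacle, is that Lemma \ref{iteracion} only bounds $\|A(n+1)-I_2\otimes A(n)\|_2^2$ by $2\lambda$ times its predecessor, and $2\lambda$ need not be less than $1$, so the Hilbert--Schmidt norms themselves may grow. It is exactly the normalization factor $2^{-(k+n)}$ supplied by the normalized trace on $\m$ that absorbs this extra factor of $2$: combining the displayed identity with the lemma gives, for $n\ge2$,
\[
\|a_{n+1}-a_n\|_2^2\le\lambda\,\|a_n-a_{n-1}\|_2^2,\qquad 0<\lambda<1 .
\]
Thus the increments decay geometrically, $\sum_n\|a_{n+1}-a_n\|_2<\infty$, and the sequence converges as claimed.

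For (i), the operations $M\mapsto I_2\otimes M$ and $M\mapsto WMW^*$ preserve self-adjointness, positivity and idempotency, so by induction $A(n)$ inherits the relevant property from $A$, and so does $a_n=\pi_{k+n-1}(A(n))$ because $\pi_{k+n-1}$ is a $*$-homomorphism; self-adjointness and positivity pass to the $\|\cdot\|_2$-limit at once. For the projection case the SOT-limit of projections need not be a projection, so here I would use that the convergence is in $\|\cdot\|_2$ and bounded: then $a_n^2\to a^2$ in $\|\cdot\|_2$, whence $a^2=\lim a_n^2=\lim a_n=a=a^*$. For (iii), the map $A\mapsto A(n)$ is linear, so $b-a=\lim_n\pi_{k+n-1}(C(n))$ with $C=B-A$; by unitary invariance of the Hilbert--Schmidt norm $\|C(n+1)\|_2^2=\|I_2\otimes C(n)\|_2^2=2\|C(n)\|_2^2$, so $\|C(n)\|_2^2=2^{n-1}\|C\|_2^2$ and
\[
\|\pi_{k+n-1}(C(n))\|_2^2=\tfrac{1}{2^{k+n-1}}\|C(n)\|_2^2=\tfrac{1}{2^{k}}\|C\|_2^2
\]
for every $n$; letting $n\to\infty$ yields $\|b-a\|_2^2=\tfrac{1}{2^k}\|B-A\|_2^2$.

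Finally, for (ii), the intertwining $E_\a\circ\pi_{k+n-1}=\pi_{k+n-1}\circ E_{\cD(2^{k+n-1})}$ together with the $\|\cdot\|_2$-contractivity of $E_\a$ gives $E_\a(a)=\lim_n\pi_{k+n-1}(E_{\cD(2^{k+n-1})}(A(n)))$. By Lemma \ref{lema diagonal}(iii) the right-hand side is $\pi_{k+n-1}$ applied to a combination of the $f^{k+n-1}_i$ with coefficients $\gamma^n_{\ell,h}$; since $\pi_{k+n-1}(f^{k+n-1}_i)=p^{k+n-1}_i=p^x(I^{k+n-1}_i)$, this operator equals $g_n(x)$, where $g_n$ is the step function taking the value of the corresponding $\gamma$ on each dyadic interval $I^{k+n-1}_i$. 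On the block of indices filling $[\tfrac{\ell-1}{2^{k-1}},\tfrac{\ell}{2^{k-1}})$ the numbers $\gamma^n_{\ell,h}=d_{2\ell-1}+\tfrac{h}{2^{n-1}}(d_{2\ell}-d_{2\ell-1})$ interpolate linearly between $d_{2\ell-1}$ and $d_{2\ell}$, so $g_n\to f$ pointwise off the dyadic rationals and boundedly, hence in $L^2([0,1],m)$. As $\tau\circ p^x=m$, this means $\|g_n(x)-f(x)\|_2\to0$, and therefore $E_\a(a)=f(x)$.
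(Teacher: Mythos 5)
Your proposal is correct and follows essentially the same route as the paper: the contraction estimate from Lemma \ref{iteracion} combined with $\|\pi_j(C)\|_2^2=2^{-j}\|C\|_2^2$ for convergence, Lemma \ref{lema diagonal}(iii) plus $\pi_{k+n-1}(f^{k+n-1}_i)=p^x(I^{k+n-1}_i)$ for (ii), and the telescoping unitary-invariance computation for (iii). The only differences are cosmetic: you are slightly more explicit about uniform boundedness and the projection case of (i), and in (ii) you identify $E_\a(a_n)$ directly as $g_n(x)$ rather than comparing it with $f(x_{k+n-1})$ as the paper does, which amounts to the same estimate.
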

\begin{proof}
Using Corollary \ref{repre}, Lemma \ref{iteracion} and the fact that
if $C\in M_{2^k}(\CC)$ then $\|\pi_{k+n-1}(C)\|_2^2= 2^{-(k+n-1)}\,
\|C\|_2^2$, we have
$$\|a_{n+1}-a_n\|_2^2 \leq \lambda\,\|a_n-a_{n-1}\|_2^2$$
 with $0<\lambda<1$, independent of $A,\, k$ and $n$. Then the sequence $\{a_n\}$ converges in $\|\cdot\|_2$
  to an operator $a\in\m$.  We now prove the remaining items.

(i) If $A$ is a projector (resp. self-adjoint, positive), then so is $A(n)$, for each $n$.
Since every $\pi_n$ is a $*$-representation,
$\pi_{n+k-1}(A(n))$ inherits the properties from $A$, and any of the three properties passes
to the $\|\cdot\|_2$-limit.

(ii) By Lemmas \ref{repre} and \ref{lema diagonal},
\begin{eqnarray*}
E_\a(a_n)&=&E_\a(\pi_{k+n-1}(A(n)))=\pi_{k+n-1}(E_{\cD(2^{k+n-1})}(A(n)))\\
&=&\ds\sum_{\ell=1}^{2^{k-1}}\sum_{h=1}^{2^{n-1}}\,
\gamma^{n}_{\ell,h-1}\, p^{k+n-1}_{2^n(\ell-1)+2h-1}+\gamma^{n}_{\ell,h}\,p^{k+n-1}_{2^n(\ell-1)+2h}.
\end{eqnarray*}
If we consider the discrete operators $x_n$ as defined in Remark \ref{el x} then
\begin{eqnarray*}
& & x_{k+n-1}=\sum_{i=1}^{2^{k+n-1}}\frac{i}{2^{k+n-1}}\,p^{k+n-1}_i\\
& &\ =\ds\sum_{\ell=1}^{2^{k-1}}\sum_{h=1}^{2^{n-1}}\,
\frac{2^n(\ell-1)+2h-1}{2^{k+n-1}}\, p^{k+n-1}_{2^n(\ell-1)+2h-1}+
\frac{2^n(\ell-1)+2h}{2^{k+n-1}}\,p^{k+n-1}_{2^n(\ell-1)+2h}.
\end{eqnarray*}
It is easy to check that
\[
\frac{\ell-1}{2^{k-1}}\leq\frac{2^n(\ell-1)+2h-1}{2^{k+n-1}}<\frac{2^n(\ell-1)+2h}{2^{k+n-1}}
<\frac{\ell}{2^{k-1}},
\] and, if $\gamma^n_{\ell,h}$ are as in the statement of Lemma \ref{lema diagonal}, then
\begin{eqnarray*}
f\left(\frac{2^n(\ell-1)+2h-1}{2^{k+n-1}}\right)&=&\gamma^n_{\ell,h-1}+\frac1{2^{n}}(d_{2\ell}-d_{2\ell-1}),
\\
f\left(\frac{2^n(\ell-1)+2h}{2^{k+n-1}}\right)&=&\gamma^n_{\ell,h-1}.
\end{eqnarray*}
So
\begin{eqnarray*}
f(x_{k+n-1})&=&\ds\sum_{\ell=1}^{2^{k-1}}\sum_{h=1}^{2^{n-1}}\,
\left(\gamma^n_{\ell,h-1}+\frac1{2^{n}}(d_{2\ell}-d_{2\ell})\right)\, p^{k+n-1}_{2^n(\ell-1)+2h-1}\\ \ \\ &+&
\gamma^n_{\ell,h-1}\,p^{k+n-1}_{2^n(\ell-1)+2h}\\ \ \\
&=&E_\a(a_n)+\sum_{\ell=1}^{2^{k-1}}\sum_{h=1}^{2^{n-1}}\,\frac1{2^{n}}(d_{2\ell}-d_{2\ell-1})
\, p^{k+n-1}_{2^n(\ell-1)+2h-1}.
\end{eqnarray*}
Thus, letting $d=\max\{d_i\}\leq\|A\|$,
\[
\|E_\a(a_n)-f(x_{k+n-1})\|=\|\sum_{\ell=1}^{2^{k-1}}\sum_{h=1}^{2^{n-1}}\,\frac1{2^{n}}(d_{2\ell}-d_{2\ell-1})
\, p^{k+n-1}_{2^n(\ell-1)+2h-1}\|\leq\frac{d}{2^{n}}.
\]
Since $a_n\xrightarrow{\|\cdot\|_2}a$,
$x_n\xrightarrow{\|\cdot\|_2}x$, $E_\a$ is normal, and $f$ is
continuous off a set of Lebesgue measure 0 (see Remark \ref{el x}),
we get $E_\a(a_n)\xrightarrow{\|\cdot\|_2}E_\a(a)$,
$f(x_n)\xrightarrow{\|\cdot\|_2}f(x)$, and so $E_\a(a)=f(x)$.

\medskip

(iii) Note that $\|I_2\otimes A\|_2^2=2\,\|A\|_2^2$. Then we have
\begin{eqnarray*}
& & \|\pi_{n+k-1}(B(n))-\pi_{n+k-1}(A(n))\|_2^2=
 \frac1{2^{n+k-1}}\,\|B(n)-A(n)\|_2^2\\
& & = \frac1{2^{n+k-1}}\,\|W_{k+n-2}(I_2\otimes(B(n-1)-A(n-1))\,W_{k+n-2}\|_2^2\\
& & = \frac1{2^{n+k-2}}\,\|B(n-1)-A(n-1)\|_2^2\\
& & \ \vdots \\
& & = \frac1{2^k}\,\|B-A\|_2^2.
\end{eqnarray*}
By continuity,
\[
\|b-a\|_2^2=\frac1{2^k}\,\|B-A\|_2^2.\qedhere
\]
\end{proof}

The  continuity property in (iii) suggests a possible strategy for
solving Kadison's conjecture in this setting: using the previous
notations, let $g(x)\in \a$ for $g\in \mathrm{L}^\infty([0,1])$,
$0\leq g\leq 1$ and for $k\in \NN$ let $g_k=\sum_{i=1}^{2^k}
g_{i,k}\ \chi_{I_i^k}$ be a sequence \emph{dyadic} discrete
functions, $0\leq g_k\leq 1$, $\int_0^1g_k(t)\, dt=2^{-k} m(k)$ for
some $m(k)\in \NN$ and such that converges to $g$ in
$\mathrm{L}^2([0,1])$. Then, if we were able to construct a sequence
of projection \emph{matrices} $A_k\in M_{2^k}(\CC)$ such that
\begin{equation}\label{en coherente}
\cD_{2^k}(A_k)=\sum_{i=1}^{2^k}g_{i,k} \,f_i^k \ \ \text{ and } \ \ \limsup_{k}
\frac12\,\frac{\|A_{k+1}-I_2\otimes A_k\|^2_2}{\|A_{k}-I_2\otimes A_{k-1}\|^2_2}<1 \end{equation} then, denoting by
$a_k=\lim_n\pi_{k+n}(A_k)$, we would have that
$$a_k\xrightarrow[k]{\|\,\|_2} a, \ \ E_\a(a_k)\xrightarrow[k]{\|\,\|_2} g(x)$$
since by \eqref{en coherente}, $\{a_k\}_k$ would be
a Cauchy sequence of projections in $\|\cdot\|_2$. Hence $a\in\m^+$ would be
a projection such that $E_\a(a)=g(x)$ for an arbitrary  $g\in
\mathrm{L}^\infty([0,1])$, $0\leq g\leq 1$.

\smallskip

\noindent{\bf Acknowledgements.} The second named author wishes to thank D. Farenick and the Department of Mathematics and Statistics
at the University of Regina for warm hospitality received during his stay. We would also like to thank R. Sasyk and S. White for useful
comments.

\end{document}